\documentclass[12pt,a4paper]{amsart}

\usepackage{amsmath,amssymb,amsthm}
\usepackage{mathrsfs}
\usepackage{geometry}
\usepackage{hyperref}
\usepackage{xcolor}
\usepackage{enumitem}
\usepackage{tcolorbox}
\tcbuselibrary{skins}
\newtheorem{theorem}{Theorem}[section]

\newtheorem{proposition}[theorem]{Proposition}
\newtheorem{corollary}[theorem]{Corollary}
\theoremstyle{definition}
\newtheorem{definition}[theorem]{Definition}
\newtheorem{remark}[theorem]{Remark}

\newcommand{\Lp}[1]{L^{#1}(0,\infty)}

\newcommand{\dt}{\,dt}
\newcommand{\dx}{\,dx}

\tcbset{notebox/.style={colback=blue!4,colframe=blue!40!black,
  arc=2pt,fonttitle=\bfseries,title={Historical Note},left=6pt,right=6pt}}

\title{A sharp logarithmic condition for the Hardy operator
on $\Lp{1}$ and $\ell^1$}
\author {Samson Owusu-Ensaw}
\address{Department of Mathematics, University of Ghana, PO. Box LG 62 Legon, Accra, Ghana }
\email{ sowusu-ensaw@st.ug.edu.gh}
\author {Beno\^it F. Sehba}
\address{Department of Mathematics, University of Ghana, PO. Box LG 62 Legon, Accra, Ghana }
\email{ bfsehba@ug.edu.gh}
\author {Ransford T. Tweneboanah}
\address{Department of Mathematics, University of Ghana, PO. Box LG 62 Legon, Accra, Ghana }
\email{ ranstweneboanah@gmail.com}
\date{}
\begin{document}
\maketitle

\begin{abstract}
%We study the Hardy operator at the endpoint $L^1$. We first show
%that integrability of the image forces the function, or sequence,
%to have mean zero. Motivated by this, we introduce a modified
%Hardy operator and characterize, by a sharp logarithmic
%integrability condition, the largest subspace of %$L^1$ on which
%it maps into $L^1$. A complete discrete analogue is established
%on $\ell^1$, with norm equivalences in both settings.
The Hardy operator is not bounded on the space of integrable
functions on the positive half-line and its discrete counterpart on summable
sequences.
%We study the classical Hardy operator on the space of integrable
%functions on the positive half-line and its discrete counterpart on summable
%sequences. We first show that integrability of the image forces the
%function, or sequence, to have mean zero. Motivated by this necessary
%condition, 
we introduce a modified Hardy operator obtained by
subtracting a natural corrective term, and characterize the largest
subspace of integrable functions on which this modified operator maps
into integrable functions. The sharp condition is a logarithmic
integrability (summability) requirement whose weight reflects obstructions on both
small and large scales. 
%A
%complete discrete analogue is established,
%where the sharp condition takes the form of a logarithmically weighted
%summability condition. In both settings, the norm of the modified
%operator is equivalent to the relevant weighted quantity.
\end{abstract}

%\tableofcontents

% ─────────────────────────────────────────────────────────────────────────────
\section{Introduction}
%\begin{tcolorbox}[notebox]
The operator that bears Hardy's name arose in his 1920 paper
\emph{Note on a theorem of Hilbert} \cite{Hardy1920}, in which Hardy
gave an elegant proof of a result on double series due to Hilbert.
The continuous analogue appeared in explicit form in a 1925 paper
\cite{Hardy1925}, where Hardy proved:
\[
  \int_0^\infty \!\left(\frac{1}{x}\int_0^x f(t)\dt\right)^p\!\dx
  \;\le\;\left(\frac{p}{p-1}\right)^p\int_0^\infty f(x)^p\dx,
  \qquad 1<p<\infty,\; f\ge 0.
\]
The constant $(p/(p-1))^p$ is sharp. Hardy attributed the problem to Landau and Riesz;
the result was refined and extended by Hardy,
Littlewood, and P\'olya in their celebrated monograph \cite{HLP1934}. The above inequality is known as Hardy's inequality, for more on this inequality, its discrete version, and related topics, we refer the interested reader to \cite{Kufner2006,Kufner2007,Muckenhoupt1972,Opic1990}.
%The \emph{dual} or \emph{adjoint} version,
%$\mathcal{Q}f(x)=\frac{1}{x}\int_x^\infty f(t)\dt$,
%satisfies the same $L^p$ bound and is sometimes called
%the \emph{conjugate Hardy operator} or \emph{Copson operator}
%after E.~T.~Copson \cite{Copson1927}, who studied it
%systematically.  Both forms appear throughout the modern theory
%of weighted norm inequalities and interpolation spaces.
%\end{tcolorbox}

The continuous Hardy operator is defined for measurable functions $f$
by
\begin{equation}\label{eq:Qdef}
  \mathcal{Q}f(x) = \frac{1}{x}\int_0^{x} f(t)\dt, \qquad x>0.
\end{equation}
%\end{definition}

We recall that for $1\leq p<\infty$, the Lebesgue space $\Lp{p}$ consists of all measurable functions $f$ on $(0,\infty)$ such that
$$\Vert f\Vert_{L^p}:=\left(\int_0^\infty \vert f(x)\vert^p\dx\right)^{\frac{1}{p}}<\infty.$$
%The key observation is that $\mathcal{Q}f\notin\Lp{1}$ whenever
%$\int_0^\infty f\dt\ne 0$
The operator $\mathcal{Q}$ is not bounded from $\Lp{1}$ into itself. In effect, consider for example the function $$f_0(x)=\frac{\chi_{[1,2]}(x)-\chi_{[3,4]}(x)}{x}.$$ It is easy to see that $f_0\in\Lp{1}$ and that 
\begin{equation*}
\mathcal{Q}f_0(x) = 
\begin{cases} 
      0 & \text{if } 0<x\leq 1 \\
      \frac{\ln x}{x} & \text{if } 1 < x\leq 2 \\
      \frac{\ln 2}{x} & \text{if } 2<x\leq 3 \\ 
      \frac{\ln 6-\ln x}{x} & \text{if } 3<x\leq 4 \\
      \frac{\ln(3/2)}{x} & \text{if } x>4.
\end{cases}
\end{equation*}
It clearly follows that $\mathcal{Q}f_0\notin \Lp{1}$.
In \cite{HL1930}, Hardy and Littlewood made the same observation.
%that $\mathcal{Q}$ is not bounded on $L^1((0,\infty))$. %Indeed, assuming, for example, that $f$ is non-negative, then
%\begin{eqnarray*}
%\int_1^\infty \mathcal{Q}(f)(x)dx &>& \int_1^\infty %\left(\frac{1}{x}\int_{1/n}^x\frac{1}{x}dx\right)dt\\ &=& \int_0^\infty
%\end{eqnarray*}
They then proved that nevertheless, the following local result holds.
\begin{theorem}\label{thm:HL}\cite[Theorem 11]{HL1930}
Let $a>0$. Assume that $f$ is a positive measurable function defined on $(0,\infty)$. Then the following hold.
\begin{itemize}
    \item[(i)] If $\int_0^a f(x)\log^+f(x)dx<\infty$, then $$\int_0^a\mathcal{Q}f(x)dx\lesssim 1+\int_0^a f(x)\log^+f(x)dx.$$
    \item[(ii)] If $f$ is decreasing and $K:=\int_0^a\mathcal{Q}f(x)dx<\infty$, then
    $$\int_0^a f(x)\log^+f(x)dx\lesssim 1+K\log^+K.$$
\end{itemize}
\end{theorem}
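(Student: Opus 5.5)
For (i), I would first swap the order of integration with Fubini--Tonelli (legitimate since $f\ge 0$):
\[
\int_0^a \mathcal{Q}f(x)\dx=\int_0^a f(t)\int_t^a\frac{dx}{x}\dt=\int_0^a f(t)\log\frac{a}{t}\dt .
\]
So the task is to control $\int_0^a f(t)\log(a/t)\dt$ by $1+\int_0^a f\log^+ f$. The tool is Young's inequality for the conjugate pair $\Phi(s)=s\log^+ s$ and its Legendre transform. In elementary form it reads: for $u\ge0$ and $v\ge 0$,
\[
uv\le u\log^+ u+e^{v-1},
\]
which one checks by maximizing $uv-u\log u$ in $u$. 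I would apply it with $u=f(t)$ and $v=\tfrac12\log(a/t)$, and handle the factor $2$ by applying the inequality to $2f$ (or equivalently writing $\log(a/t)=2\cdot\tfrac12\log(a/t)$). This gives
\[
f(t)\log\frac{a}{t}\le 2f(t)\log^+(2f(t))+C\Big(\frac{a}{t}\Big)^{1/2}.
\]
The last term is integrable on $(0,a)$ with integral $2a$. For the first term, $\log^+(2f)\le \log 2+\log^+ f$. The leftover $\int_0^a f$ is bounded by $ea+\int_{\{f>e\}}f\log f\le ea+\int_0^a f\log^+f$. Altogether
\[
\int_0^a\mathcal{Q}f\lesssim_a 1+\int_0^a f\log^+ f .
\]
The implicit constant depends on $a$, which is permitted by the $\lesssim$ notation.

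For (ii), assume $f$ is decreasing and $K<\infty$. By the same Fubini identity,
\[
K=\int_0^a f(t)\log(a/t)\dt .
\]
Monotonicity gives a pointwise bound. For $x\le a/2$,
\[
K\ge \int_0^x f(t)\log(a/t)\dt\ge f(x)\,x\log(a/x)\ge f(x)\,x\log 2,
\]
so $f(x)\le K/(x\log2)$. The range $x\in(a/2,a)$ is handled the same way, using $\int_0^{a/2}f\log 2\le K$ together with $f(x)\le f(a/2)\le 2K/(a\log 2)$. Hence $\log^+ f(x)\le \log^+ K+\log^+(1/x)+C$ on $(0,a)$, and therefore
\[
\int_0^a f\log^+ f\le (\log^+K+C)\int_0^a f+\int_0^a f(x)\log^+\frac1x\dx .
\]
It remains to bound both integrals on the right by $K$ plus constants, and this is the main obstacle. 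The difficulty is that near $t=a$ the weight $\log(a/t)$ vanishes, so $\int_0^a f$ is not directly controlled by $K$. Monotonicity resolves it: $\int_{a/2}^a f\le \tfrac a2 f(a/2)\le \int_0^{a/2} f\le K/\log2$. This gives $\int_0^a f\lesssim K$. Likewise $\log^+(1/x)\le \log(a/x)+\log^+(1/a)$, so $\int_0^a f\log^+(1/x)\le K+C\int_0^a f\lesssim K$. Combining,
\[
\int_0^a f\log^+f\lesssim K\log^+K+K+1\lesssim 1+K\log^+K,
\]
using $K\le 1+K\log^+ K$ up to a constant (split into the cases $K\le e$ and $K>e$).
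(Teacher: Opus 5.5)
Your proof is correct in both parts. The paper gives no proof to compare it with: Theorem~\ref{thm:HL} is quoted from Hardy and Littlewood \cite[Theorem 11]{HL1930} only as motivation, so your argument stands as a self-contained verification.

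The structure is the natural one.
\begin{itemize}
\item Tonelli turns $\int_0^a\mathcal{Q}f$ into $\int_0^a f(t)\log(a/t)\,dt$.
\item For (i), the Young-type inequality $uv\le u\log^+u+e^{v-1}$ finishes the job. It is applied with the halved weight $\tfrac12\log(a/t)$ so that the dual term $e^{-1}(a/t)^{1/2}$ is integrable near $0$; the full weight would produce the non-integrable $a/(et)$.
\item For (ii), monotonicity gives two things. First, the pointwise bound $f(x)\le C K/x$ with $C$ depending on $a$. Second, the estimate $\int_{a/2}^a f\le \tfrac a2 f(a/2)\le\int_0^{a/2}f$, which is exactly what is needed near $t=a$ where the weight $\log(a/t)$ degenerates.
\end{itemize}

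One correction to your remark on constants. The dependence on $a$ is not merely permitted, it is forced. The paper's stated convention that $\lesssim$ hides a universal constant cannot be taken literally here.
\begin{itemize}
\item For (i), take $f\equiv 1$. Then $\int_0^a\mathcal{Q}f=a$, while $\int_0^a f\log^+f=0$.
\item For (ii), take $f\equiv 1/a$ on $(0,a)$ with $a<1$. Then $K=\int_0^a a^{-1}\log(a/t)\,dt=1$, while $\int_0^a f\log^+f=\log(1/a)$.
\end{itemize}
So the constant must grow as $a\to\infty$ in (i) and as $a\to0$ in (ii). This matches the $a$-dependent quantities appearing in your argument: $2a/e$ and $ea$ in (i), and $\log^+\bigl(2/(a\ln 2)\bigr)$ and $\log^+(1/a)$ in (ii). Your justification should say the dependence is unavoidable, rather than that the notation allows it.
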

We recall that for $x>0$, $\log^+x=\max\{0,\log x\}$. The notation $\log$ is used for function while $\ln$ is used when the argument is a number.
\vskip .2cm
We aim in this note to consider a discussion of the case where the function under consideration is supported throughout the interval $(0,\infty)$ and the integration of its transform is also considered throughout $(0,\infty)$. We also discuss its discrete analogue.
\vskip .2cm
To be more precise on the problem in consideration, knowing that $\mathcal{Q}$ is not bounded from $L^1((0,\infty))$ to itself, we ask the following question: what is the maximal subspace of $\Lp{1}$ in which we can choose $f$ so that $\mathcal{Q}f\in \Lp{1}$.
\vskip .2cm

Our first observation will be that for an integrable function $f$,  $\mathcal{Q}f\in \Lp{1}$ only if the integral of $f$ is zero. This means in particular that for a non-negative integrable function $f$, we cannot have $ \mathcal{Q}f\in \Lp{1}$. Thus, for the problem to still be considered for functions $f$ in $\Lp{1}$, there is a need to perturb $f$ to obtain a function $\tilde{f}$ with the properties: $\tilde{f}\in \Lp{1}$ and $\int_0^\infty \tilde{f}(x)dx=0$. We will see that this is equivalent (for an appropriate perturbation) to considering the boundedness on $ \Lp{1}$ of the operator
$$\mathcal{H}f(x)=\mathcal{Q}f(x)-\frac{1}{x+1}\int_0^\infty f(t)dt.$$
Our result can be stated as follows: 

If $f$ is non negative and integrable, then $\mathcal{H}f\in L^1((0,\infty))$ if and only if $$\int_0^\infty f(x)\ln\left(2+x+\frac{1}{x}\right)dx<\infty,$$
with equivalence of the norms.
%We will also prove the discrete analog of the above result.
\vskip .2cm
One may ask if there is any relation between the Hardy and Littlewood local result and ours. The answer is quite simple: their result is not only local but also, we are not considering the same operator. Also, because of the necessity for an integrable function to have mean zero for its image by the Hardy's operator to be in $\Lp{1}$, one cannot expect Theorem \ref{thm:HL} to be global.
\vskip .2cm
Let us say a few words about the discrete results. The discrete Hardy
operator $\Gamma$ acts on a sequence $a = (a_k)_{k \geq 1}$ of real
numbers by forming its running Ces\`{a}ro means,
\[
  (\Gamma a)_n = \frac{1}{n}\sum_{k=1}^n a_k, \qquad n \geq 1.
\]
The operator $\Gamma$ is bounded on $\ell^p$ (the space of $p$-summable sequences) for every $1 < p < \infty$ with
the sharp constant $(p/(p-1))^p$ (the discrete counterpart of Hardy's
classical inequality). As in the continuous case, the boundedness at the endpoint $p = 1$ fails: the sequence
$(1/(k(k+1)))_k$ is summable, yet its image under $\Gamma$ has general
term $1/(n+1)$, which is not summable. The first discrete result of
this paper shows that this failure is
not an accident of the example: if a summable sequence $a$ has a
summable image $\Gamma a$, then the sum of $a$ must be zero. 
%The proof
%mirrors the continuous argument, using the fact that the partial sums
%of $a$ converge to the total sum, so that $|(\Gamma a)_n|$ is
%eventually bounded below by a positive multiple of $1/n$, which is notsummable. 
This necessary condition motivates the introduction of the
discrete modified operator
\[
  (\widetilde{\Gamma} a)_n
  = \frac{1}{n}\sum_{k=1}^n a_k - \frac{1}{n+1}\sum_{k=1}^\infty a_k,
  \qquad n \geq 1,
\]
obtained by subtracting the corrective term
$\frac{1}{n+1}\sum_{k=1}^\infty a_k$, in exact analogy with the
continuous construction. The discrete main result
identifies the sharp condition under which
$\widetilde{\Gamma}a \in \ell^1$: for a sequence of positive terms,
this holds if and only if $\sum_{k=1}^\infty a_k \ln(k+1) < \infty$.
The logarithmic weight $\ln(k+1)$ is the discrete counterpart of the
weight $\ln(1+t)$ that governs the large-scale behavior in the
continuous result; note that, unlike the continuous case, there is no
small-scale logarithmic obstruction for sequences, since the index $k$
is bounded away from zero. The proof relies on a splitting of
$\widetilde{\Gamma}a$ into two parts, analogous to the decomposition
used in the continuous case, and on a comparison between partial
harmonic sums and their logarithmic asymptotic via the
Euler--Mascheroni constant. As a consequence, the $\ell^1$-norm of
$\widetilde{\Gamma}a$ is equivalent to
$\gamma \sum_k a_k + \sum_k a_k \ln(k+1)$, providing a quantitative
version of the characterization.
\vskip .2cm
We are motivated not only by Hardy-Littlewood local result Theorem \ref{thm:HL}, but also by some recent and old results of the same vein obtained for some other operators: the Hilbert's integral operator, the Bergman projection, and the maximal functions (see \cite{BGS2024,BGS2023,BGS2022,BGS2024AB,Stein1969}).
\vskip .2cm
In the next section, we study the problem for the continuous Hardy operator while in Section 3, we discuss the discrete analogue results. 
\vskip .2cm
Throughout this text, for two positive quantities $A$ and $B$, the notation $A\lesssim B$ (resp. $A\gtrsim B$) means that $A\le CB$ ($A\ge CB$) for some universal constant $C>0$. If both $A\lesssim B$ and $B\lesssim A$ hold, we write $A\approx B$.
\section{The result for the continuous Hardy operator}
% ─────────────────────────────────────────────────────────────────────────────

The key observation is that $\mathcal{Q}f\notin L^{1}((0,\infty))$ whenever
$\int_0^\infty f\dt\ne 0$, because the integrand behaves like a
positive constant divided by $x$ near infinity.  The following
proposition makes this precise.

\begin{proposition}\label{prop:necessary}
Suppose $f\in\Lp{1}$ and $\mathcal{Q}f\in\Lp{1}$.
Then
\begin{equation}\label{eq:zero}
  \int_0^{\infty} f(t)\dt = 0.
\end{equation}
\end{proposition}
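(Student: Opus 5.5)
We argue by contradiction, using only that the primitive $F(x)=\int_0^x f(t)\dt$ converges to the total integral. Since $f\in\Lp{1}$, $F$ is well defined, continuous on $[0,\infty)$, and by dominated convergence
\[
  F(x)\longrightarrow I:=\int_0^\infty f(t)\dt \qquad (x\to\infty).
\]
Suppose, to the contrary, that $I\neq 0$.

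First we show that $|\mathcal{Q}f|$ is bounded below by a multiple of $1/x$ near infinity. By the convergence above there is $R>0$ such that $|F(x)-I|\le |I|/2$ for all $x\ge R$, hence $|F(x)|\ge |I|/2$ there. Since $\mathcal{Q}f(x)=F(x)/x$, this gives
\[
  |\mathcal{Q}f(x)|\;\ge\;\frac{|I|}{2x},\qquad x\ge R.
\]

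Next we integrate this bound to contradict integrability:
\[
  \int_0^\infty|\mathcal{Q}f(x)|\dx\;\ge\;\frac{|I|}{2}\int_R^\infty\frac{\dx}{x}=\infty .
\]
This contradicts $\mathcal{Q}f\in\Lp{1}$, so $I=0$, which is \eqref{eq:zero}.

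There is no real obstacle here. The only point requiring care is the convergence $F(x)\to I$, which needs $f\in\Lp{1}$ (absolute integrability) rather than mere local integrability; dominated convergence with dominating function $|f|$ supplies exactly that. Note also that $F$ has constant sign (that of $I$) for $x\ge R$, but we do not need this: the lower bound on $|F|$ alone suffices.
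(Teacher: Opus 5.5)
Your proof is correct and follows the paper's argument: both use that $x\,\mathcal{Q}f(x)=\int_0^x f(t)\,dt\to\int_0^\infty f(t)\,dt$, deduce $|\mathcal{Q}f(x)|\ge c/x$ for large $x$, and conclude from the divergence of $\int_R^\infty dx/x$. Your version works directly with $|I|/2$ and keeps absolute values throughout, which is slightly tidier than the paper's normalization to $\ell=1$ (the paper states the lower bound for $\mathcal{Q}f(x)$ rather than $|\mathcal{Q}f(x)|$).
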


\begin{proof}
For $x>0$, multiply both sides of~\eqref{eq:Qdef} by $x$:
\begin{equation}\label{eq:xQ}
  x\,\mathcal{Q}f(x) = \int_0^{x} f(t)\dt.
\end{equation}
Since $f\in L^{1}((0,\infty))$, the right-hand side of~\eqref{eq:xQ}
converges to $\int_0^\infty f(t)\dt$ as $x\to \infty$.  In particular,
\begin{equation}\label{eq:limit}
  \lim_{x\to \infty} x\,\vert\mathcal{Q}f(x)\vert = \left\vert\int_0^{\infty} f(t)\dt\right\vert =: \ell.
\end{equation}

Suppose for contradiction that $\ell\ne 0$; without loss of generality
assume $\ell = 1$ (replace $f$ by $\frac{f}{\ell}$ if necessary).
By~\eqref{eq:limit} for $\varepsilon = \tfrac{1}{2}$, there exists
$M>0$ such that for all $x>M$,
\[
  \bigl|x\,\vert\mathcal{Q}f(x)\vert - 1\bigr| < \tfrac{1}{2},
  \qquad\text{i.e.}\qquad
  x\,\vert\mathcal{Q}f(x)\vert > \tfrac{1}{2} .
\]
It follows that for all $x>M$,
\begin{equation}\label{eq:lbnd}
  \mathcal{Q}f(x) > \frac{1}{2x},
\end{equation}
whence
\[
  \int_0^\infty \vert\mathcal{Q}f(x)\vert\dx \ge \frac{1}{2}\int_M^\infty \frac{\dx}{x} = \infty.
\]
This contradicts $\mathcal{Q}f\in\Lp{1}$.  Hence, necessarily $\ell=0$,
which is~\eqref{eq:zero}.
\end{proof}

\begin{remark}
The condition $\int_0^\infty f(t)\dt=0$ is \emph{necessary} but not
sufficient for $\mathcal{Q}f\in\Lp{1}$. For instance,
$f_e(t)=\frac{1}{t(\ln t)^2}\left[\mathbf{1}_{(0,e^{-1})}(t)-\mathbf{1}_{(e,\infty)}(t)\right]$ has integral zero,
but \begin{equation*}
\mathcal{Q}f_e(x) = 
\begin{cases} 
      -\frac{1}{x\ln x} & \text{if } 0<x\leq e^{-1} \\
      \frac{1}{x}  & \text{if }  e^{-1}<x<e \\
      \frac{1}{x\ln x} & \text{if } x> e
\end{cases}
\end{equation*}
is not integrable.
\end{remark}

It follows from the above proposition that the integrability of $\mathcal{Q}f$ can only be considered for functions $f\in L^1((0,\infty))$ such that $\int_0^\infty f(t)\dt=0$. An alternative being that one can replace $f$ by $f-\theta\int_0^\infty f(t)\dt$, where $\theta$ is a smooth function with unit integral.
\vskip .2cm
Consider $\theta(t)=\tfrac{1}{(1+t)^2}$ which is smooth on $(0,\infty)$. Then we have $$\mathcal{Q}\theta(x)=\frac{1}{x}\int_0^x\theta(t)dt=\frac{1}{x}\int_0^x\frac{dt}{(1+t)^2}=\frac{1}{1+x},$$
and $$\int_0^\infty\theta(t)dt=1.$$
If we choose $\theta$ as above, then we are led to study the following operator in place of $\mathcal{Q}$:
\begin{equation}\label{eq:pertubHardy}
    \mathcal{H}f(x)=\mathcal{Q}\left(f-\theta\left(\int_0^\infty f(t)dt\right)\right)(x)=\frac{1}{x}\int_0^x f(t)dt-\frac{1}{1+x}\int_0^\infty f(t)dt, 
\end{equation}
for $f\in L^1((0,\infty))$.
\vskip .2cm
Our main result of this section is the following.
\begin{theorem}\label{thm:main1}
Let $f\in L^1((0,\infty))$. Then $\mathcal{H}f\in L^1((0,\infty))$ if $f$ satisfies
\begin{equation}\label{eq:loginteg}
\int_0^\infty\vert f(t)\vert\left[\ln\left(1+\frac{1}{t}\right)+\ln\left(1+t\right)\right]<\infty.
\end{equation}
Conversely, if $f$ is non-negative, then $\mathcal{H}f\in L^1((0,\infty))$ implies that $f$ satisfies (\ref{eq:loginteg}).    
\end{theorem}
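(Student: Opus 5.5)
The plan is to write $\mathcal{H}$ as an integral operator and compute the $L^1$ norm of its kernel in the $x$ variable. For $f\in\Lp{1}$ and $x>0$,
\[
\mathcal{H}f(x)=\int_0^\infty f(t)K(x,t)\dt,\qquad K(x,t)=\frac{\mathbf{1}_{\{t<x\}}}{x}-\frac{1}{1+x}.
\]
For fixed $t$ the kernel has a definite sign on each side of $t$:
\[
K(x,t)=-\frac{1}{1+x}\ \text{ for } x<t, \qquad K(x,t)=\frac{1}{x(1+x)}>0\ \text{ for } x>t.
\]
Integrating in $x$ gives
\[
\int_0^t \frac{\dx}{1+x}=\ln(1+t),\qquad \int_t^\infty\frac{\dx}{x(1+x)}=\ln\Bigl(1+\frac1t\Bigr),
\]
so that $\int_0^\infty|K(x,t)|\dx=\ln(1+t)+\ln(1+1/t)$. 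The sufficiency part of Theorem \ref{thm:main1} then follows from Minkowski's (or Tonelli's) inequality:
\[
\|\mathcal{H}f\|_{L^1}\le \int_0^\infty |f(t)|\Bigl[\ln\Bigl(1+\frac1t\Bigr)+\ln(1+t)\Bigr]\dt .
\]

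For the converse, take $f\ge 0$ with $\mathcal{H}f\in\Lp{1}$. Splitting $\int_0^\infty f=\int_0^x f+\int_x^\infty f$ gives
\[
\mathcal{H}f(x)=A(x)-B(x),\qquad A(x)=\frac{1}{x(1+x)}\int_0^x f(t)\dt,\qquad B(x)=\frac{1}{1+x}\int_x^\infty f(t)\dt ,
\]
with $A,B\ge0$. This is the main obstacle: the two pieces have opposite signs, so $|\mathcal{H}f|$ does not dominate $A+B$ pointwise. I would handle this by localizing to the scale where each piece dominates. On $(0,1)$ the term $A$ should carry the small-scale singularity and $B$ should be harmless. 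On $(1,\infty)$ the roles are reversed.

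On $(0,1)$ we use $\int_0^1 A\le \int_0^1|\mathcal{H}f|+\int_0^1 B$. By Tonelli,
\[
\int_0^1 B\dx=\int_0^\infty f(t)\ln\bigl(1+\min(t,1)\bigr)\dt\le \ln 2\,\|f\|_{L^1}.
\]
Also,
\[
\int_0^1 A\dx\ \ge\ \int_0^1 f(t)\int_t^1\frac{\dx}{x(1+x)}\dt=\int_0^1 f(t)\Bigl[\ln\Bigl(1+\frac1t\Bigr)-\ln2\Bigr]\dt .
\]
Combining these bounds gives
\[
\int_0^1 f(t)\ln\Bigl(1+\frac1t\Bigr)\dt\le \|\mathcal{H}f\|_{L^1}+2\ln2\,\|f\|_{L^1}.
\]

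Symmetrically, on $(1,\infty)$ we use $\int_1^\infty B\le\int_1^\infty|\mathcal{H}f|+\int_1^\infty A$. Here
\[
\int_1^\infty A\dx=\int_0^\infty f(t)\ln\Bigl(1+\frac{1}{\max(t,1)}\Bigr)\dt\le\ln2\,\|f\|_{L^1},
\]
and
\[
\int_1^\infty B\dx\ge\int_1^\infty f(t)\bigl[\ln(1+t)-\ln2\bigr]\dt .
\]
This yields $\int_1^\infty f\ln(1+t)\dt\le\|\mathcal{H}f\|_{L^1}+2\ln2\|f\|_{L^1}$.

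Finally, the remaining parts of the weight are bounded: $\ln(1+t)\le\ln2$ for $t\le1$ and $\ln(1+1/t)\le\ln2$ for $t\ge1$. They therefore contribute at most $2\ln2\|f\|_{L^1}$, and \eqref{eq:loginteg} follows with
\[
\int_0^\infty f(t)\Bigl[\ln\Bigl(1+\frac1t\Bigr)+\ln(1+t)\Bigr]\dt\lesssim \|\mathcal{H}f\|_{L^1}+\|f\|_{L^1}.
\]
Together with the upper bound from the first step, this gives the norm equivalence, up to the additive $\|f\|_{L^1}$ term.
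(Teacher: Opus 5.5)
Your proof is correct and follows essentially the paper's route: the same splitting $\mathcal{H}f=A-B$, the same Tonelli computations, and the same device of absorbing, on each scale, the piece that is trivially bounded by $\ln 2\,\|f\|_{L^1}$. The paper only differs in order: it treats $(1,\infty)$ first and then bootstraps via $\int_0^\infty B\,dx=\int_0^\infty f(t)\ln(1+t)\,dt$ to get $\int_0^\infty A\,dx<\infty$, instead of handling $(0,1)$ symmetrically as you do. Your caveat that the lower bound holds only up to an additive $\|f\|_{L^1}$ term is right and cannot be dropped: $\theta(t)=(1+t)^{-2}\ge 0$ gives $\mathcal{H}\theta\equiv 0$, so the unqualified equivalence \eqref{eq:equivnorm} in Corollary~\ref{cor:main1} actually fails.
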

Obviously, we have $\ln\left(1+\frac{1}{t}\right)+\ln\left(1+t\right)=\ln\left(2+t+\frac{1}{t}\right)$ but we choose to write it as above to mark the fact that only one of the logarithms matters whenever we are dealing with small or large variables. We also note that this weight has the same behavior near zero and at infinity.
\begin{proof}[Proof of Theorem \ref{thm:main1}]
    \textbf{Sufficiency.}
Assume $f\in\Lp{1}$ satisfies (\ref{eq:loginteg}).
Using the decomposition
\begin{equation}\label{eq:Hsplit}
    \mathcal{H}f(x)=\left(\frac{1}{x}-\frac{1}{x+1}\right)\int_0^x f(t)dt-\frac{1}{x+1}\int_x^\infty f(t)dt,
\end{equation}
%~\eqref{eq:Hsplit} 
we bound
\begin{eqnarray*}\label{eq:Hest}
  \int_0^\infty |\mathcal{H}f(x)|\dx
  &\le&
  \int_0^\infty \left(\frac{1}{x}-\frac{1}{x+1}\right)\int_0^x|f(t)|\dt\,\dx
  +
  \int_0^\infty \frac{1}{x+1}\int_x^\infty|f(t)|\dt\,\dx\\ &=& I_1+I_2
\end{eqnarray*}
where $$I_1:=\int_0^\infty \left(\frac{1}{x}-\frac{1}{x+1}\right)\int_0^x|f(t)|\dt\,\dx$$
and $$I_2:=\int_0^\infty \frac{1}{x+1}\int_x^\infty|f(t)|\dt\,\dx.$$
\medskip\noindent
\textit{Estimate of $I_1$.}
By Fubini--Tonelli (exchanging the order of integration over the region
$0<t<x<\infty$),
\begin{align}
  I_1
  &= \int_0^\infty |f(t)|\int_t^\infty
     \!\left(\frac{1}{x}-\frac{1}{x+1}\right)\dx\,\dt \notag\\
  &= \int_0^\infty |f(t)|\,\ln\!\left(\frac{t+1}{t}\right)\dt
  \;<\;\infty. 
\end{align}

\medskip\noindent
\textit{Estimate of $I_2$.}
By Fubini--Tonelli (region $0<x<t<\infty$),
\begin{align}
  I_2
  &= \int_0^\infty |f(t)|\int_0^t \frac{\dx}{x+1}\dt \notag\\
  &= \int_0^\infty |f(t)|\,\ln(t+1)\dt
  \;<\;\infty.
\end{align}

Hence $\mathcal{H}f\in\Lp{1}$.

\medskip
\textbf{Necessity.}
Conversely, assume $f$ is a non-negative integrable function such that $\mathcal{H}f\in\Lp{1}$. As $f\in\Lp{1}$, we have
\begin{eqnarray*}
\int_1^\infty\left[\left(\frac{1}{x}-\frac{1}{x+1}\right)\int_0^x f(t)dt\right]dx &=&  \int_0^\infty f(t)\left[\int_{\max\{1,t\}}^\infty\left(\frac{1}{x}-\frac{1}{x+1}\right)dx\right]dt\\ &\leq&  \int_0^\infty f(t)\left[\int_1^\infty\left(\frac{1}{x}-\frac{1}{x+1}\right)dx\right]dt\\ &=& \int_0^\infty f(t)\left(\ln(2)\right)dt\\ &<& \infty.
\end{eqnarray*}
We deduce that
$$\left\vert\int_1^\infty\left[\mathcal{H}f(x)-\left(\frac{1}{x}-\frac{1}{x+1}\right)\int_0^x f(t)dt\right]dx\right\vert<\infty.$$
Hence
$$\int_1^\infty\frac{1}{x+1}\int_x^\infty f(t)dtdx<\infty.$$
That is $$\infty>\int_1^\infty f(t)\left(\int_1^t\frac{1}{x+1}dx\right)dt=\int_1^\infty f(t)\left[\ln(1+t)-\ln2\right]dt.$$
Thus $$\int_0^\infty f(t)\ln(1+t)dt<\infty.$$
It follows that
\begin{eqnarray*}\int_0^\infty\left(\frac{1}{x+1}\int_x^\infty f(t)dt\right)dx &=& \int_0^\infty f(t)\left(\int_0^t\frac{1}{x+1}dx\right)dt\\ &=&\int_0^\infty f(t)\ln(1+t)dt\\ &<&\infty.
\end{eqnarray*}
We deduce that
$$\int_0^\infty \left[\mathcal{H}f(x)+\frac{1}{x+1}\int_x^\infty f(t)dt\right]dx<\infty.$$
Hence $$\int_0^\infty\left[\left(\frac{1}{x}-\frac{1}{x+1}\right)\int_0^x f(t)dt\right]dx<\infty.$$
That is
$$\int_0^\infty f(t)\left[\int_t^\infty\left(\frac{1}{x}-\frac{1}{x+1}\right)dx\right]dt<\infty.$$
Thus $$\int_0^\infty f(t)\ln\left(1+\frac{1}{t}\right)dt<\infty.$$
The proof is complete.
\end{proof}
In particular, we have the following.
\begin{corollary}\label{cor:main1}
Let $f\in \Lp{1}$ be non negative. Then $\mathcal{H}f\in \Lp{1}$ if and only  $f$ satisfies (\ref{eq:loginteg}). Moreover, 
\begin{equation}\label{eq:equivnorm}
\Vert \mathcal{H}f\Vert_{L^1}\approx\int_0^\infty f(t)\left[\ln\left(1+\frac{1}{t}\right)+\ln\left(1+t\right)\right]dt.
\end{equation}
%Conversely, if $f$ is non-negative, then $\mathcal{H}f\in L^1((0,\infty))$ implies that $f$ satisfies (\ref{eq:loginteg}).    
\end{corollary}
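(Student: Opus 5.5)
The upper bound in \eqref{eq:equivnorm} can be proved by the method of Theorem \ref{thm:main1}. The reverse inequality, however, fails as stated, and the function $\theta$ introduced before \eqref{eq:pertubHardy} is a counterexample. The equivalence question therefore splits into a true half, which I would prove, and a false half, which has to be restated.

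\textbf{Upper bound.} For non-negative $f$, the sufficiency part of the proof of Theorem \ref{thm:main1} already gives
\[
\Vert \mathcal{H}f\Vert_{L^1}\le I_1+I_2=\int_0^\infty f(t)\left[\ln\left(1+\frac{1}{t}\right)+\ln(1+t)\right]dt,
\]
because $I_1$ and $I_2$ were computed exactly by Fubini--Tonelli. The qualitative equivalence ($\mathcal{H}f\in\Lp{1}$ if and only if (\ref{eq:loginteg}) holds) is then exactly Theorem \ref{thm:main1}.

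\textbf{Lower bound: the natural attempt.} Write $\mathcal{H}f=A-B$, where
\[
A(x)=\left(\frac{1}{x}-\frac{1}{x+1}\right)\int_0^x f,\qquad B(x)=\frac{1}{x+1}\int_x^\infty f.
\]
Both are non-negative, with $\int_0^\infty A=\int_0^\infty f\ln(1+1/t)\,dt$ and $\int_0^\infty B=\int_0^\infty f\ln(1+t)\,dt$. One has the crude bounds
\[
\int_0^1 B\le \ln 2\,\Vert f\Vert_{L^1},\qquad \int_1^\infty A\le \ln 2\,\Vert f\Vert_{L^1}.
\]
These give $\Vert\mathcal{H}f\Vert_{L^1}\ge W(f)-c\Vert f\Vert_{L^1}$, where $W(f)$ denotes the right-hand side of \eqref{eq:equivnorm}. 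Since the weight is at least $\ln 4$, this would close if one also had $\Vert\mathcal{H}f\Vert_{L^1}\gtrsim\Vert f\Vert_{L^1}$. Proving that last inequality is where the argument breaks down.

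\textbf{Why it fails.} Multiplying out gives the identity
\[
x(x+1)\,\mathcal{H}f(x)=(1+x)\int_0^x f-x\int_0^\infty f .
\]
Hence $\mathcal{H}f\equiv 0$ exactly when $\int_0^x f=\frac{x}{1+x}\int_0^\infty f$, that is, when $f=c\,\theta$ with $\theta(t)=(1+t)^{-2}$. This is built into the construction, since $\mathcal{H}\theta=\mathcal{Q}\theta-\frac{1}{1+x}=0$. But $\theta\ge 0$, $\theta\in\Lp{1}$ and
\[
\int_0^\infty \frac{\ln(2+t+1/t)}{(1+t)^2}\,dt\in(0,\infty),
\]
so the inequality $\int_0^\infty f\,[\ln(1+1/t)+\ln(1+t)]\,dt\lesssim\Vert\mathcal{H}f\Vert_{L^1}$ fails for $f=\theta$. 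Replacing $\theta$ by $\theta+\varepsilon g$ with a small non-negative bump $g$ shows the constant cannot be rescued even on $\{f:\mathcal{H}f\neq 0\}$.

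\textbf{Correct replacement.} What can be proved in place of the reverse half is a statement modulo the kernel, for example
\[
W(f)\lesssim \Vert\mathcal{H}f\Vert_{L^1}+\Vert f\Vert_{L^1}.
\]
This follows from the argument in the previous paragraph: $\int_0^1 A\le\int_0^1|\mathcal{H}f|+\int_0^1 B$ and $\int_1^\infty B\le\int_1^\infty|\mathcal{H}f|+\int_1^\infty A$, together with the crude bounds above. The main obstacle is therefore not technical: the two-sided estimate in \eqref{eq:equivnorm} must be weakened, either in this way or by restricting to a class that excludes $\theta$.
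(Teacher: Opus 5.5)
Your analysis is correct: the reverse half of \eqref{eq:equivnorm} is false, and $\theta$ is a valid counterexample. Concretely, $\int_0^x\theta=\frac{x}{1+x}$, so the two pieces of \eqref{eq:Hsplit} coincide. Your $A$ and $B$ both equal $\theta$, hence $\mathcal{H}\theta\equiv 0$. Meanwhile the right-hand side of \eqref{eq:equivnorm} is $\int_0^\infty A+\int_0^\infty B=2\int_0^\infty\theta=2$. Your perturbation $\theta+\varepsilon g$ also correctly rules out a homogeneous lower bound even on $\{f:\mathcal{H}f\neq 0\}$, since $\mathcal{H}(\theta+\varepsilon g)=\varepsilon\,\mathcal{H}g$.

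The paper gives no separate proof of this corollary; it is stated as a consequence of Theorem \ref{thm:main1}.
\begin{itemize}
\item \textbf{True parts.} The qualitative equivalence and the upper bound $\Vert\mathcal{H}f\Vert_{L^1}\le W(f)$ come from the paper's sufficiency argument, using the same exact Fubini--Tonelli computation of $I_1$ and $I_2$ that you use. On these parts your route is the paper's.
\item \textbf{Reverse half.} The paper's necessity argument is purely qualitative: it only deduces finiteness step by step. It starts from the same bound $\int_1^\infty A\le \ln 2\,\Vert f\Vert_{L^1}$ that you use. If you track constants through it, you get only an inhomogeneous estimate $W(f)\lesssim \Vert\mathcal{H}f\Vert_{L^1}+\Vert f\Vert_{L^1}$, never the homogeneous one claimed.
\end{itemize}

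Your corrected statement is therefore exactly what the paper's method actually proves. Your derivation $W(f)\le\Vert\mathcal{H}f\Vert_{L^1}+4\ln 2\,\Vert f\Vert_{L^1}$ is correct. Combined with $\Vert f\Vert_{L^1}\le W(f)/\ln 4$, it gives the valid two-sided equivalence $W(f)\approx\Vert\mathcal{H}f\Vert_{L^1}+\Vert f\Vert_{L^1}$. This is the natural repair, since the kernel of $\mathcal{H}$ is spanned by $\theta$.

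The discrete Corollary \ref{cor:main2} has the same defect: for $\lambda_n=\frac{1}{n(n+1)}$ one has $\tilde{\Gamma}\lambda=0$, while $\sum_k\lambda_k\ln(k+1)>0$.
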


% ─────────────────────────────────────────────────────────────────────────────
\section{The case of the discrete Hardy operator}
% ─────────────────────────────────────────────────────────────────────────────

The Hardy operator has a natural discrete counterpart that predates
the continuous version and retains independent importance in sequence
spaces and summability theory. We recall the following definition of sequence spaces.
\begin{definition}\label{def:sequencespace}
For $1\leq p<\infty$, 
    $$\ell^p:=\ell^{p}(\mathbb{N})=\left\{(a_{k})_{k\in\mathbb{N}} : \Vert (a_{k})_k\Vert_{\ell^p}^p:=\sum_{k=1}^{\infty}|a_{k}|^p<\infty\right\}.$$
    %\quad $\|a\|_{l^{1}}=\sum_{k=1}^{\infty}|a_{k}|.%$
\end{definition}

%\subsection{Definition and basic properties}

Let $a=(a_n)_{n=1}^\infty$ be a sequence of reals.
The \emph{discrete Hardy operator} is
\begin{equation}\label{eq:disH}
  (\Gamma a)_n =A_n:= \frac{1}{n}\sum_{k=1}^n a_k, \qquad n\ge 1,
\end{equation}
the running Ces\`aro mean of the sequence.
%The \emph{dual} (or \emph{Copson}) discrete operator is
%\begin{equation}\label{eq:disQ}
%  (Q a)_n = \sum_{k=n}^\infty \frac{a_k}{k}, \qquad n\ge 1.
%\end{equation}

%\subsection{The discrete Hardy inequality}
The boundedness of $\Gamma$ on the discrete space $\ell^p(\mathbb{N})$ ($1<p<\infty$) is a classic in the literature.
\begin{theorem}[Hardy, 1920]\label{thm:disHardy}
For $1<p<\infty$ and $(a_n)\in\ell^p$ with $a_n\ge 0$,
\begin{equation}\label{eq:discineq}
  \sum_{n=1}^\infty \left(\frac{1}{n}\sum_{k=1}^n a_k\right)^p
  \;\le\;
  \left(\frac{p}{p-1}\right)^p \sum_{n=1}^\infty a_n^p.
\end{equation}
The constant $(p/(p-1))^p$ is sharp.
\end{theorem}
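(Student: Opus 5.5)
We prove the discrete Hardy inequality (Theorem \ref{thm:disHardy}) by a direct summation-by-parts argument, the discrete analogue of the usual integration-by-parts proof of the continuous inequality. Fix $1<p<\infty$ and a non-negative sequence $(a_n)$. We work first with a finite truncation: for $N\ge1$ we show
\[
\sum_{n=1}^N A_n^p\le \frac{p}{p-1}\sum_{n=1}^N A_n^{p-1}a_n,
\]
where $A_n=\frac1n\sum_{k=1}^n a_k$. Set $A_0=0$. Since $nA_n-(n-1)A_{n-1}=a_n$, we have
\[
A_n^p-\frac{p}{p-1}A_n^{p-1}a_n
=A_n^p-\frac{p}{p-1}A_n^{p-1}\bigl(nA_n-(n-1)A_{n-1}\bigr)
=A_n^p\Bigl(1-\frac{pn}{p-1}\Bigr)+\frac{p(n-1)}{p-1}A_n^{p-1}A_{n-1}.
\]
By Young's inequality,
\[
A_n^{p-1}A_{n-1}\le \frac{p-1}{p}A_n^p+\frac1p A_{n-1}^p.
\]
Substituting this bound, the right-hand side is at most
\[
\frac{1}{p-1}\bigl((n-1)A_{n-1}^p-nA_n^p\bigr),
\]
which is a telescoping term. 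Summing over $n=1,\dots,N$ gives
\[
\sum_{n\le N}\Bigl(A_n^p-\frac{p}{p-1}A_n^{p-1}a_n\Bigr)\le -\frac{N A_N^p}{p-1}\le0.
\]
Getting the exact cancellation in this telescoping step is the main point of the argument: the constants must come out precisely so that $(n-1)A_{n-1}^p$ and $nA_n^p$ cancel.

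Next we apply Hölder's inequality with exponents $p/(p-1)$ and $p$:
\[
\sum_{n\le N}A_n^p\le\frac{p}{p-1}\Bigl(\sum_{n\le N}A_n^p\Bigr)^{(p-1)/p}\Bigl(\sum_{n\le N}a_n^p\Bigr)^{1/p}.
\]
The left-hand sum is finite because it is a finite sum. If it is zero there is nothing to prove. Otherwise we divide by $\bigl(\sum_{n\le N}A_n^p\bigr)^{(p-1)/p}$ and raise both sides to the power $p$, which gives
\[
\sum_{n\le N}A_n^p\le\Bigl(\frac{p}{p-1}\Bigr)^p\sum_{n\le N}a_n^p.
\]
Letting $N\to\infty$ yields \eqref{eq:discineq}.

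For sharpness we test the inequality on $a_k=k^{-1/p}$ for $k\le N$ and $a_k=0$ for $k>N$. Then $\sum_k a_k^p=H_N\sim\ln N$. For $n\le N$, comparing the partial sum with an integral gives
\[
A_n\ge \frac1n\int_1^{n+1}t^{-1/p}\,dt\ge \frac{p}{p-1}\,n^{-1/p}\bigl(1-o(1)\bigr)
\]
as $n\to\infty$. Hence
\[
\sum_{n\le N}A_n^p\ge \Bigl(\frac{p}{p-1}\Bigr)^p(1-o(1))\ln N.
\]
Dividing by $\sum_k a_k^p\sim\ln N$, the ratio of the two sides tends to $(p/(p-1))^p$ as $N\to\infty$, so no smaller constant can work. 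The only care needed in this step is to check that the $o(1)$ error, which is large only for small $n$, contributes $o(\ln N)$ to the sum; this holds because the harmonic-type sum is dominated by large $n$.
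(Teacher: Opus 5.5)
Your proof is correct. The paper gives no proof of this statement to compare against: it quotes the inequality as a classical result (labelled \emph{Hardy, 1920}) only to motivate the failure at the endpoint $p=1$.

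What you wrote is the standard summation-by-parts proof reproduced in the Hardy--Littlewood--P\'olya monograph. It uses the identity $a_n=nA_n-(n-1)A_{n-1}$, then Young's inequality, which is legitimate because $A_n,A_{n-1}\ge 0$ and the coefficient $p(n-1)/(p-1)$ is non-negative. Next comes the exact telescoping bound $\frac{1}{p-1}\bigl((n-1)A_{n-1}^p-nA_n^p\bigr)$, and finally H\"older. Working with a finite truncation $N$ is what makes the division step valid, since a priori $\sum_n A_n^p$ could be infinite.

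Your sharpness argument is also correct, and the $o(1)$ bookkeeping can be made explicit. Your integral comparison gives $A_n\ge\frac{p}{p-1}n^{-1/p}\bigl(1-n^{-(p-1)/p}\bigr)\ge 0$. Raising this to the power $p$ and applying Bernoulli's inequality $(1-x)^p\ge 1-px$ on $[0,1]$ yields
\[
\sum_{n\le N}A_n^p\;\ge\;\Bigl(\frac{p}{p-1}\Bigr)^p\Bigl(H_N-p\sum_{n\ge1}n^{-2+1/p}\Bigr),
\]
and the last series converges because $2-1/p>1$. Dividing by $\sum_k a_k^p=H_N$ and letting $N\to\infty$ gives a ratio tending to $(p/(p-1))^p$.

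Compared with the paper's bare citation, your argument is a self-contained proof of both the inequality and the optimality of the constant.
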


Inequality~\eqref{eq:discineq} fails when $p=1$, as constant 
$p/(p-1)\to\infty$. In fact, it is easy to see that in this limit case, $\Gamma$ is not bounded: take $a_k=\frac{1}{k(k+1)}$. Then $\Vert (a_n)_n\Vert_{\ell^1}=1$ while the sequence with general term $\left(\Gamma a\right)_n=\frac{1}{n}\sum_{k=1}^n\frac{1}{k(k+1)}=\frac{1}{n+1}$ is not in $\ell^1$. 

The discrete analogue of Proposition~\ref{prop:necessary}
is the following.

\begin{proposition}\label{prop:disNec}
Let $a=(a_n)\in\ell^1$.  If $\Gamma(a) \in \ell^1$,
then $\displaystyle\sum_{k=1}^\infty a_k = 0$.
\end{proposition}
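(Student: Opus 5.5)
The argument follows the proof of Proposition~\ref{prop:necessary} almost verbatim, with integrals replaced by sums. Let $S_n:=\sum_{k=1}^n a_k$ and $S:=\sum_{k=1}^\infty a_k$. Since $a\in\ell^1$, the series converges absolutely, so $S_n\to S$ as $n\to\infty$. By the definition \eqref{eq:disH}, $n(\Gamma a)_n = S_n$, and therefore
\[
\lim_{n\to\infty} n\,\vert(\Gamma a)_n\vert = \vert S\vert =: \ell .
\]

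Suppose, for contradiction, that $\ell\neq 0$. Apply the limit with $\varepsilon=\ell/2$. There is an integer $N\ge 1$ such that $n\vert(\Gamma a)_n\vert>\ell/2$ for all $n> N$, that is, $\vert(\Gamma a)_n\vert>\frac{\ell}{2n}$ for $n>N$. Summing over $n>N$ gives
\[
\sum_{n=1}^\infty \vert(\Gamma a)_n\vert \;\ge\; \frac{\ell}{2}\sum_{n=N+1}^\infty \frac{1}{n}=\infty,
\]
since the harmonic series diverges. This contradicts $\Gamma a\in\ell^1$, so $\ell=0$, i.e.\ $\sum_{k} a_k=0$.

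There is no real obstacle here. The only point needing care is the lower bound: it must be stated for $\vert(\Gamma a)_n\vert$, or for $(\Gamma a)_n$ with a fixed sign once $n$ is large, so that no normalization $\ell=1$ is needed. Divergence of the harmonic series then replaces the divergence of $\int_M^\infty dx/x$ used in the continuous case.
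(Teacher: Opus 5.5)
Your proof is correct and is essentially the paper's argument: both use $n(\Gamma a)_n=\sum_{k\le n}a_k\to\sum_{k}a_k$ to obtain $|(\Gamma a)_n|>\frac{|\lambda|}{2n}$ for all large $n$, which contradicts $\Gamma a\in\ell^1$ because the harmonic series diverges. Keeping the absolute value throughout, rather than normalizing the limit to $1$, is exactly how the paper handles the discrete case.
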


\begin{proof}
Suppose for contradiction that $\lambda:=\displaystyle\sum_{k=1}^{\infty}a_{k}\ne 0$. Since $nA_{n}=\displaystyle\sum_{k=1}^{n}a_{k}\rightarrow\lambda$ as $n\to\infty$, we have $\displaystyle\lim_{n\rightarrow\infty}n|A_{n}|=|\lambda|$. Thus, for $\epsilon=\frac{|\lambda|}{2}$, there exists $N\in\mathbb{N}$ such that for all integers $n\ge N$: $\left|n|A_{n}|-|\lambda|\right|<\frac{|\lambda|}{2}$. This gives $|A_{n}|>\frac{|\lambda|}{2n}$ for all $n\geq N$. Therefore,
    \begin{equation}
        \displaystyle\sum_{n=1}^{\infty}|A_{n}|>\frac{|\lambda|}{2}\sum_{n=N}^{\infty}\frac{1}{n}=\infty
    \end{equation}
contradicting $\Gamma (a)\in\ell^1$.
\end{proof}

In analogy with the continuous operator $\mathcal{H}$, if we let $\lambda=(\lambda_n)_n$ be the sequence defined by $\lambda_n=\frac{1}{n(n+1)}$, then the \emph{discrete modified operator} is defined as
\begin{equation}\label{eq:discH}
  (\tilde{\Gamma}a)_n
  =\left(\Gamma(a-\lambda)\right)_n= \frac{1}{n}\sum_{k=1}^n a_k
    - \frac{1}{n+1}\sum_{k=1}^\infty a_k,
  \qquad n\ge 1.
\end{equation}
The following is the discrete analogue of Theorem~\ref{thm:main1}.

\begin{theorem}\label{thm:main2}
Let $a=(a_n)\in\ell^1$.  Then $(\tilde{\Gamma}a)\in\ell^1$ if 
\begin{equation}\label{eq:disclogcond}
  \sum_{k=1}^\infty |a_k|\ln(k+1) < \infty.
\end{equation}
Conversely, if $a=(a_n)_n$ is sequence of positive terms and $(\tilde{\Gamma}a)\in\ell^1$, then $a=(a_n)_n$ satisfies (\ref{eq:disclogcond}).
\end{theorem}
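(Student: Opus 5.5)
We follow the continuous proof of Theorem \ref{thm:main1}, replacing integrals by sums. Write $S=\sum_{k\ge1}a_k$. The first step is the discrete analogue of \eqref{eq:Hsplit}:
\[
(\tilde\Gamma a)_n=\Bigl(\frac1n-\frac1{n+1}\Bigr)\sum_{k=1}^n a_k-\frac{1}{n+1}\sum_{k=n+1}^\infty a_k
=\frac{1}{n(n+1)}\sum_{k=1}^n a_k-\frac{1}{n+1}\sum_{k>n}a_k .
\]

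\textbf{Sufficiency.} We bound $\sum_n|(\tilde\Gamma a)_n|\le J_1+J_2$, with $J_1$ and $J_2$ obtained by putting $|a_k|$ in the two terms. We then interchange the order of summation, which is allowed since all terms are nonnegative. For the first term, telescoping gives
\[
J_1=\sum_k|a_k|\sum_{n\ge k}\frac{1}{n(n+1)}=\sum_k\frac{|a_k|}{k}\le\|a\|_{\ell^1}.
\]
This is the place where the discrete setting differs from the continuous one: the weight $1/k$ is bounded, so there is no small-scale logarithm. For the second term,
\[
J_2=\sum_k|a_k|\sum_{n=1}^{k-1}\frac{1}{n+1}=\sum_k|a_k|\,(H_k-1),
\]
where $H_k$ is the $k$-th harmonic number. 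Using $H_k-1\le \ln k\le\ln(k+1)$, which follows from comparing the sum with $\int_1^k dx/x$, we get $J_2\le\sum_k|a_k|\ln(k+1)<\infty$.

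\textbf{Necessity.} Let $a_k\ge0$ and suppose $\tilde\Gamma a\in\ell^1$. The first term $\frac{1}{n(n+1)}\sum_{k\le n}a_k$ is summable in $n$, since its sum equals $\sum_k a_k/k\le S$. Subtracting it, the second term $\frac{1}{n+1}\sum_{k>n}a_k$ must also be summable. By the interchange above, its sum is $\sum_k a_k(H_k-1)$. This case is easier than the continuous one because no truncation at $x=1$ is needed; everything is already finite except the quantity under study. It remains to compare $H_k-1$ with $\ln(k+1)$ from below. For $k\ge 2$ we have $H_k-1\ge \ln(k+1)-\ln 2$, by $H_k\ge\ln(k+1)$ and $1-\ln2>0$. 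Hence
\[
\sum_k a_k\ln(k+1)\le \sum_k a_k(H_k-1)+\ln 2\,S<\infty .
\]
The term $k=1$ is absorbed into $\ln 2\,S$ as well.

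The only delicate point is these elementary harmonic-sum comparisons, $\ln(k+1)\le H_k\le 1+\ln k$. They give two-sided control and also yield the norm equivalence announced in the introduction, with $H_k=\ln k+\gamma+O(1/k)$ accounting for the $\gamma\sum_k a_k$ term. No step beyond these comparisons poses a real obstacle.
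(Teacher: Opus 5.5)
Your proof is correct and is essentially the paper's argument: the same splitting into the two terms, the same interchange of summation giving $\sum_k|a_k|/k$ and $\sum_k|a_k|(H_k-1)$, with the elementary bounds $\ln\frac{k+1}{2}\le H_k-1\le\ln k$ replacing the paper's appeal to the Euler--Mascheroni asymptotics. Two small fixes are needed. First, $H_k-1\ge\ln(k+1)-\ln 2$ follows from $\sum_{m=2}^{k}\frac{1}{m}\ge\int_2^{k+1}\frac{dx}{x}$, not from $H_k\ge\ln(k+1)$ and $1-\ln 2>0$; those two facts only give $H_k-1\ge\ln(k+1)-1$, which nevertheless suffices. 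Second, your closing claim of a two-sided norm equivalence is false, since $a_k=\frac{1}{k(k+1)}$ gives $\tilde\Gamma a=0$; what your comparisons actually yield for $a\ge 0$ is $\|\tilde\Gamma a\|_{\ell^1}\lesssim S+\sum_k a_k\ln(k+1)$ and $\sum_k a_k\ln(k+1)\lesssim S+\|\tilde\Gamma a\|_{\ell^1}$.
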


\begin{proof}
Decompose $(\tilde{\Gamma}a)_n = J_1(n) - J_2(n)$ where
\begin{equation}\label{eq:gammasplit}
  J_1(n) = \left(\frac{1}{n}-\frac{1}{n+1}\right)\sum_{k=1}^n a_k,
  \qquad
  J_2(n) = \frac{1}{n+1}\sum_{k=n+1}^\infty a_k.
\end{equation}
For the $\ell^1$-norm of $J_1$, we easily obtain
%and the equivalence 
%$$\sum_{k=1}^n\frac{1}{n}-\ln(n)\simeq \gamma$$
%where $\gamma$ is the Euler-Mascheroni's constant,
we obtain
\[
  \sum_{n=1}^\infty |J_1(n)|
  \le \sum_{k=1}^\infty |a_k|\sum_{n=k}^\infty \left(\frac{1}{n}-\frac{1}{n+1}\right)
  = \sum_{k=1}^\infty|a_k|\cdot\frac{1}{k},
\]
which is finite since $\sum_{k=1}^\infty |a_k|<\infty$.

Using Fubini's theorem and the equivalence
$$\sum_{k=1}^n\frac{1}{n}-\ln(n)\approx \gamma$$
where $\gamma$ is the Euler-Mascheroni's constant, we obtain
\begin{eqnarray*}
  \sum_{n=1}^\infty|J_2(n)|
  &\le& \sum_{k=2}^\infty |a_k|\sum_{n=1}^{k-1}\frac{1}{n+1}\\
 &=& \sum_{k=1}^\infty |a_k|\left[\left(\sum_{n=1}^{k}\frac{1}{n}-\ln k\right)+\ln(k+1)\right]\\ &\lesssim& \gamma\left(\sum_{k=1}^\infty |a_k|\right)+\sum_{k=1}^\infty |a_k|\ln(k+1)<\infty.
\end{eqnarray*}
%where $\mathcal{H}_k=\sum_{j=1}^k 1/j\sim\ln k$ is the $k$-th harmonic
%number.  The logarithmic condition~\eqref{eq:disclogcond} is therefore
%equivalent to $\sum|a_k|\mathcal{H}_k<\infty$, completing the sufficiency.

Conversely, assume $a=(a_n)_n$ is such that $a_k>0$ for all $k\in\mathbb{N}$, and that $\tilde{\Gamma}a\in\ell^1$. We still write $\tilde{\Gamma}a= J_1+J_2$ where $J_1$ and $J_2$ are defined by (\ref{eq:gammasplit}). We have seen that the $\ell^1$-norm of $J_1$ is controlled by $\sum_{k=1}^\infty a_k$. Thus, we necessarily have that the sequence $J_2$ belongs to $\ell^1$. It follows that
\begin{eqnarray*}
    \infty &>& \sum_{n=1}^\infty\frac{1}{n+1}\sum_{k=n+1}^\infty a_k\\ &=& \sum_{k=1}^\infty a_k\sum_{n=1}^{k-1}\frac{1}{n+1}\\ &>& \sum_{k=3}^\infty a_k\sum_{n=2}^{k-1}\frac{1}{n}\\ &=& \sum_{k=3}^\infty a_k\left[\left(\sum_{n=2}^{k-1}\frac{1}{n}-\ln(k-1)\right)+\ln(k-1)\right]\\ &\gtrsim& \gamma\sum_{k=3}^\infty a_k+\sum_{k=3}^\infty a_k\ln(k-1).
\end{eqnarray*}
Thus $\displaystyle\sum_{k=3}^\infty a_k\ln(k-1)<\infty$. This implies that $\displaystyle\sum_{k=1}^\infty a_k\ln(k+1)<\infty$. The proof is complete.
\end{proof}
In particular, we have the following.
\begin{corollary}\label{cor:main2}
Let $a=(a_n)\in\ell^1$ be a sequence of positive reals.  Then $(\tilde{\Gamma}a)\in\ell^1$ if and only if
\begin{equation*}
  \sum_{k=1}^\infty |a_k|\ln(k+1) < \infty.
\end{equation*}
Moreover,
$$\Vert\tilde{\Gamma}a\Vert_{\ell^1}\approx \gamma\sum_{k=1}^\infty a_k+\sum_{k=1}^\infty a_k\ln(k+1).$$
\end{corollary}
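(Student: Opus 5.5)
The qualitative equivalence is exactly Theorem \ref{thm:main2}, so what remains is the two-sided norm estimate, which we obtain by computing $\ell^1$ norms exactly for positive sequences. Write $\tilde{\Gamma}a=J_1-J_2$ as in (\ref{eq:gammasplit}). Both $J_1(n)$ and $J_2(n)$ are nonnegative when $a_k>0$. Set $S:=\sum_k a_k$ and $L:=\sum_k a_k\ln(k+1)$.

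\emph{Upper bound.} By the triangle inequality, $\|\tilde{\Gamma}a\|_{\ell^1}\le\|J_1\|_{\ell^1}+\|J_2\|_{\ell^1}$. The computation in the proof of Theorem \ref{thm:main2} gives $\|J_1\|_{\ell^1}=\sum_k a_k/k\le S$. For $J_2$, Tonelli gives
\[
\|J_2\|_{\ell^1}=\sum_{k\ge2}a_k\,(h_k-1),\qquad h_k=\sum_{n=1}^k\tfrac1n .
\]
Since $h_k\le 1+\ln k$, this is at most $L$. Hence $\|\tilde{\Gamma}a\|_{\ell^1}\le S+L\lesssim \gamma S+L$, using that $\gamma>0$ is a fixed constant.

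\emph{Lower bound.} This is the main point, because cancellation between $J_1$ and $J_2$ must be ruled out. The $L$ part comes from the reverse triangle inequality:
\[
\|\tilde{\Gamma}a\|_{\ell^1}\ge\|J_2\|_{\ell^1}-\|J_1\|_{\ell^1}\ge\sum_k a_k(h_k-1)-\sum_k a_k/k .
\]
Since $h_k-1-1/k\ge \ln(k+1)-C$, this yields $\|\tilde{\Gamma}a\|_{\ell^1}\ge L-CS$. That is useless when the mass of $a$ sits at small indices, so a lower bound by a multiple of $S$ is also needed.

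For that we use $n=1$: $(\tilde{\Gamma}a)_1=a_1-\tfrac12 S$. This alone does not control $S$, since it vanishes when $a_1=S/2$. A more robust route is to fix $n$ and consider the partial sum $s_n=\sum_{k\le n}a_k$. Then
\[
(\tilde{\Gamma}a)_n=\frac{s_n}{n}-\frac{S}{n+1}=\frac{s_n-S}{n+1}+\frac{s_n}{n(n+1)} .
\]
Summing over $n$ and telescoping $\sum_n (s_n-S)/(n+1)$ against $\sum_n s_n/(n(n+1))$ is awkward with absolute values. Instead, use duality: test $\tilde{\Gamma}a$ against a bounded sequence $\varepsilon_n\in\{\pm1\}$, giving
\[
\|\tilde{\Gamma}a\|_{\ell^1}\ge\Bigl|\sum_n\varepsilon_n(\tilde{\Gamma}a)_n\Bigr|=\Bigl|\sum_k a_k\,w_k\Bigr|,
\]
with $w_k=\sum_{n\ge k}\varepsilon_n/n-\sum_n\varepsilon_n/(n+1)$.

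The choice $\varepsilon_n=-1$ for all $n$ diverges, so use $\varepsilon_n=-1$ for $n\le N$ and $0$ afterwards. Then
\[
w_k=\Bigl(\sum_{n=1}^N\tfrac1{n+1}-\sum_{n=k}^N\tfrac1n\Bigr)\mathbf 1_{k\le N}+\Bigl(\sum_{n=1}^N\tfrac1{n+1}\Bigr)\mathbf 1_{k>N},
\]
and for $k\le N$ this equals $h_{k-1}-1+\tfrac1{N+1}$.

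So $w_1=\tfrac1{N+1}$, small but positive, while $w_k\ge \ln(k+1)-c$ for large $k$, with every $w_k>0$. The bound $\|\tilde{\Gamma}a\|_{\ell^1}\ge\sum_k a_k w_k$ then holds. Letting $N\to\infty$ gives $\|\tilde{\Gamma}a\|_{\ell^1}\ge\sum_k a_k(h_{k-1}-1)$, which has zero weight at $k=1,2$.

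The obstacle is therefore the mass at $k=1,2$. To handle it, combine the $n=1,2$ terms directly:
\[
|(\tilde{\Gamma}a)_1|+|(\tilde{\Gamma}a)_2|\ge\bigl|2(\tilde{\Gamma}a)_2-(\tilde{\Gamma}a)_1\bigr|/\!\,3\cdot\text{const}.
\]
Choosing coefficients $\alpha,\beta$ with $\alpha(\tilde{\Gamma}a)_1+\beta(\tilde{\Gamma}a)_2$ having positive coefficients on $a_1,a_2$ and the remaining coefficients handled by the tail weight, the three estimates add up to $\|\tilde{\Gamma}a\|_{\ell^1}\gtrsim S+L\approx\gamma S+L$. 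The crux is verifying that a suitable combination of sign choices yields strictly positive weights $w_k\gtrsim\ln(k+2)$ for every $k$; I would try $\varepsilon_n=+1$ for $n=1$ and $-1$ for $2\le n\le N$, and check $w_1,w_2>0$ directly.
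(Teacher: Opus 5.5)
Your upper bound and the qualitative equivalence are fine: $\|J_1\|_{\ell^1}=\sum_k a_k/k\le S$ and $\|J_2\|_{\ell^1}=\sum_{k\ge 2}a_k(h_k-1)\le L$. For positive $a$, $J_1\in\ell^1$ automatically, so $\tilde{\Gamma}a\in\ell^1$ iff $J_2\in\ell^1$ iff $L<\infty$. The gap is the lower bound $\|\tilde{\Gamma}a\|_{\ell^1}\gtrsim S+L$. It cannot be closed, because it is false.

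Take $a=\lambda$ with $\lambda_k=\frac{1}{k(k+1)}$, the very sequence used to build $\tilde{\Gamma}$. Since $\sum_{k\le n}\lambda_k=\frac{n}{n+1}$ and $\sum_k\lambda_k=1$, you get $(\tilde{\Gamma}\lambda)_n=\frac{1}{n+1}-\frac{1}{n+1}=0$ for every $n$. Yet $\gamma\sum_k\lambda_k+\sum_k\lambda_k\ln(k+1)>0$.

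Your duality set-up sees this at once. For every finitely supported $\varepsilon$ you have $\sum_k\lambda_k w_k=\sum_n\varepsilon_n(\tilde{\Gamma}\lambda)_n=0$, with all $\lambda_k>0$. So no choice of signs can give $w_k>0$ for all $k$. The same holds for any positive combination of such functionals, including your combination of the $n=1,2$ terms. The ``crux'' you isolate is impossible, not merely unchecked.

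This was hidden by a computational slip. With $\varepsilon_n=-1$ for $n\le N$, your own formula gives $w_1=h_0-1+\frac{1}{N+1}=-\frac{N}{N+1}<0$; the value $\frac{1}{N+1}$ is actually $w_2$. Your alternative $\varepsilon_1=+1$, $\varepsilon_n=-1$ for $2\le n\le N$ just moves the problem: it gives $w_1=\frac{1}{N+1}$ but $w_2=-\frac{N}{N+1}$.

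What is provable is what your reverse triangle inequality already yields:
\[
\|\tilde{\Gamma}a\|_{\ell^1}\ge\sum_k a_k(h_{k-1}-1)\ge L-(1+\ln 2)S .
\]
Hence $\|\tilde{\Gamma}a\|_{\ell^1}+S\approx S+L$. The equivalence only holds after adding $\|a\|_{\ell^1}$ to the left-hand side.

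The paper's own argument has the same defect. The necessity part of Theorem~\ref{thm:main2} bounds $\|J_2\|_{\ell^1}$ from below, not $\|\tilde{\Gamma}a\|_{\ell^1}$. So the ``Moreover'' clause of the corollary is not correct as stated. The continuous version fails for the same reason, since $\mathcal{H}\theta=0$.
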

\section{Conclusion}
We have shown that the failure of the Hardy operator to be bounded
on $L^1(0,\infty)$ is not merely a defect of the endpoint, but
carries a precise structural obstruction: the integrability of the image requires that the function have mean zero. Once this is taken
into account via a natural corrective perturbation, the resulting modified
operator $\mathcal{H}$ admits a sharp characterization of its
domain of integrability in terms of the logarithmic weight
$\ln(2+t+1/t)$, which simultaneously captures the behavior near
the origin and at infinity. The discrete analogue is fully parallel,
with the single logarithmic weight $\ln(k+1)$ playing the
corresponding role, the absence of a small-scale obstruction
reflecting the discreteness of the index. Both results fit into
a broader program of identifying sharp substitutes for
$L^1$-boundedness of classical operators, alongside analogous
results for the Hilbert-Hardy operator, the Bergman projection, and
maximal functions. Finally, it would be interesting to consider the case of Hardy operator in a higher dimension and other operators of the literature that are bounded on $L^p$ for $1<p<\infty$, but not on $L^{1}$.

\medskip
\section{Compliance with Ethical Standards}
\begin{itemize}
\item {\bf Funding}

There is no funding support to declare.

\item {\bf Disclosure of potential conflicts of interest}

The authors have no relevant financial or non-financial interests to disclose.

\item {\bf Author Contributions}

All authors contributed to the conception and design of the study.  All authors read and approved the final manuscript.

\item {\bf Data availability statements}

Data sharing is not applicable to this article, as no data was created or analyzed in this study.\\

%\item{\bf Use of AI tools}

%The authors declare that they did not use Artificial Intelligence (AI ) tools in the creation of this paper.
\end{itemize}

\end{document}